\documentclass[12pt]{article}
\usepackage[english]{babel}
\usepackage{latexsym,dsfont}
\usepackage{fullpage}
\usepackage[T1]{fontenc}
\usepackage{amsmath}
\usepackage{amsfonts}
\usepackage{amssymb}
\usepackage{mathrsfs}
\usepackage{amsthm}
\usepackage{latexsym}
\usepackage{array}
\usepackage{mathrsfs}
\usepackage{amsxtra}
\usepackage{amscd}
\usepackage{mathtools}
\usepackage{verbatim}
\usepackage{stmaryrd}
\usepackage{enumerate}
\usepackage{frcursive}
\usepackage{hyperref}
\hypersetup{
    colorlinks=true,
    linkcolor=blue,
	linktocpage=true
}

\numberwithin{equation}{section}

\theoremstyle{plain}
\newtheorem{Theorem}{Theorem}[section]
\newtheorem{Definition}[Theorem]{Definition}
\newtheorem{Proposition}[Theorem]{Proposition}
\newtheorem{Lemma}[Theorem]{Lemma}

\newtheorem{Remark}[Theorem]{Remark}

\newenvironment{Assumption}[1]
{\innerAssumption}
{\endinnerAssumption}

\def \E{\mathbb{E}}

\def \N{\mathbb{N}}
\def \P{\mathbb{P}}
\def \R{\mathbb{R}}

\def \Fc{{\cal F}}

\def \Nc{{\cal N}}

\def \Wc{{\cal W}}

\begin{document}

\title{On smooth approximations in the Wasserstein space}

\author{
Andrea COSSO\footnote{Universit\`a degli Studi di Milano, Dipartimento di Matematica, via Saldini 50, 20133 Milano, Italy; \texttt{andrea.cosso@unimi.it}} \quad Mattia MARTINI\footnote{Universit\'e C\^ote d'Azur, CNRS, Laboratoire J.A. Dieudonn\'e, 06108 Nice, France; \texttt{mattia.martini@univ-cotedazur.fr}} \footnote{M. Martini conducted this research during his PhD at Universit\`a degli Studi di Milano. Currently, M. Martini is supported by the European Research Council (ERC) under the European Union Horizon 2020 research and innovation program (ELISA project, Grant agreement nr. 101054746).}}

\maketitle

\begin{abstract}
\noindent In this paper we investigate the approximation of continuous functions on the Wasserstein space by smooth functions, with smoothness meant in the sense of Lions differentiability. In particular, in the case of a Lipschitz function we are able to construct a sequence of infinitely differentiable functions having the same Lipschitz constant as the original function. This solves an open problem raised in \cite{MZ}. For (resp. twice) continuously differentiable function, we show that our approximation also holds for the first-order derivative (resp. second-order derivatives), therefore solving another open problem raised in \cite{MZ}.
\end{abstract}

\vspace{5mm}

\noindent {\bf Keywords:} Wasserstein space, Lions differentiability, smooth approximations.

\vspace{5mm}

\noindent {\bf Mathematics Subject Classification (2020):} 28A33, 28A15, 49N80

\section{Introduction}

In the present paper our aim is to find smooth approximations of continuous functions $u\colon\mathscr P_2(\R^d)\rightarrow\R$, where $\mathscr P_2(\R^d)$ is the space of probability measures on $(\R^d,\mathscr B(\R^d))$ endowed with the $2$-Wasserstein metric $\Wc_2$ (see \eqref{W2}). Here smooth is meant in the sense of Lions differentiability, whose main features are recalled in Section \ref{S:LionsDiff}. We show that when $u$ is continuous there exists a sequence $\{u_k\}_k\in C^\infty(\mathscr P_2(\R^d))$ which converges to $u$ uniformly on compact subsets of $\mathscr P_2(\R^d)$. If in addition $u$ is uniformly (resp. Lipschitz) continuous then each $u_k$ is also uniformly (resp. Lipschitz) continuous, with the same modulus of continuity (resp. Lipschitz constant) as $u$, therefore solving an open problem raised in \cite[Remark 3.2-(i)]{MZ}. Moreover, for $u\in C^1(\mathscr P_2(\R^d))$ (resp. $u\in C^2(\mathscr P_2(\R^d))$) we show that the convergence also holds for the first-order derivative (resp. second-order derivatives), therefore solving another open problem raised in \cite[Remark 3.2-(ii)]{MZ}.

The smooth approximating sequence $\{u_k\}_k\in C^\infty(\mathscr P_2(\R^d))$ is constructed relying on the empirical distribution, similarly to what is done in \cite{CD18_I,Martini1} in the proof of It\^o's formula along a flow of measures (see \cite[Theorem 5.92]{CD18_I} and \cite[Proposition 5.1]{Martini1}), although there it is not really a smoothing as both $u$ and $u_k$ are of class $C^2(\mathscr P_2(\R^d))$, but rather a way to approximate a function $u$ on $\mathscr P_2(\R^d)$ by functions defined on finite-dimensional spaces. On the other hand, in \cite[Section 3]{MZ} an approximating sequence $\{u_k\}_k\in C^\infty(\mathscr P_2(\R^d))$ for a continuous function $u$ is built relying on the idea of discretizing a generic probability measure $\mu\in\mathscr P_2(\R^d)$ by a sequence of discrete distributions $\{\mu_k\}_k\subset\mathscr P_2(\R^d)$. However, as mentioned in \cite[Remark 3.2]{MZ}, such an approximation does not satisfy certain properties (unless imposing stronger assumptions on $u$) that are instead satisfied by our approximating sequence.

The existence of a smooth approximating sequence for continuous functions $u\colon\mathscr P_2(\R^d)\rightarrow\R$ plays a relevant role in different contexts, as for instance in the study of mean field games or mean field control problems (for which we refer to \cite{carda12,CD18_I,Lions}). We mention in particular \cite{TTZ} where the approximating sequence constructed in \cite{MZ} was used and forced the authors to impose stronger assumptions on the coefficients of the mean field optimal stopping problem, which can be avoided by relying on our approximating sequence. We also mention \cite{CKGPR_Uniq} and \cite{Martini2}, where a smooth approximating sequence is used to prove the comparison theorem for viscosity solutions of the Hamilton-Jacobi-Bellman of the mean field control problem and of the backward Kolmogorov equation associated to the nonlinear filtering equation. We finally mention again \cite{CD18_I,Martini1} where a smooth approximating is constructed in order to prove It\^o's formula along a flow of probability measures.

The rest of the paper is organized as follows. In Section \ref{S:Setting} we introduce the notations, the probabilistic setting, and the Wasserstein space $(\mathscr P_2(\R^d),\Wc_2)$. We also prove some specific properties of such a space that are used in the subsequent sections. Section \ref{S:LionsDiff} is devoted to recall the main features of Lions differentiability. Finally, Section \ref{S:SmoothApprox} contains the main results of the paper, namely Theorem \ref{T:Approx}, concerning the smooth approximations of a continuous function $u\colon\mathscr P_2(\R^d)\rightarrow\R$, and Theorem \ref{T:Approx_Deriv}, regarding the approximation of first and second-order derivatives.

\section{Probabilistic setting and Wasserstein space}
\label{S:Setting}

Let $(\Omega,\Fc,\P)$ be a probability space. Given a random variable $\xi\colon\Omega\rightarrow\R^d$ we denote by $\P_\xi$ its law on $(\R^d,\mathscr B(\R^d))$, where $\mathscr B(\R^d)$ denotes the Borel $\sigma$-algebra on $\R^d$. Moreover, we denote by $L^2(\Omega,\Fc,\P;\R^d)$ (or simply $L^2$) the space of (equivalence classes of) $\R^d$-valued random variables on $(\Omega,\Fc,\P)$. We also denote by $\|\xi\|_{L^2}$ the $L^2$-norm of a random variable $\xi\in L^2(\Omega,\Fc,\P;\R^d)$.\\
In the sequel we always impose the following assumption on the probability space $(\Omega,\Fc,\P)$.

\begin{Assumption}{\bf(H$_U$)}\label{A:H_U}
There exists a real-valued random variable $U$ defined on $(\Omega,\Fc,\P)$ having uniform distribution on $[0,1]$.
\end{Assumption}

\noindent The above assumption is equivalent to other requirements on $(\Omega,\Fc,\P)$, as established in the following lemma.

\begin{Lemma}\label{L:H_U}
Let $(\Omega,\Fc,\P)$ be a probability space. Then, for every $d\in\N$ and $p\in[1,\infty)$, the following statements are equivalent.
\begin{enumerate}[\upshape1)]
\item $(\Omega,\Fc,\P)$ satisfies Assumption \ref{A:H_U}.
\item For every probability $\mu$ on $(\R^d,\mathscr B(\R^d))$, with finite $p$-th moment, that is $\int_{\R^d}|x|^p\mu(dx)$ $<$ $\infty$, there exists an $\R^d$-valued random variable $\xi$ on $(\Omega,\Fc,\P)$ with distribution $\mu$.
\item For every probability $\mu$ on $(\R^d,\mathscr B(\R^d))$ there exists an $\R^d$-valued random variable $\xi$ on $(\Omega,\Fc,\P)$ with distribution $\mu$, that is $\P_\xi=\mu$.
\item $(\Omega,\Fc,\P)$ is atomless: for every $A\in\Fc$, with $\P(A)>0$, there exists $B\in\Fc$, with $B\subset A$, such that $0<\P(B)<\P(A)$.
\end{enumerate}
\end{Lemma}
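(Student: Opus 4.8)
The plan is to close a cycle among statements 1), 2), 3) and to establish the equivalence $1)\Leftrightarrow 4)$ separately, so that all four statements become equivalent; throughout, $d\in\N$ and $p\in[1,\infty)$ are fixed. The implication $3)\Rightarrow 2)$ is immediate, since a measure with finite $p$-th moment is in particular a probability measure. For $2)\Rightarrow 1)$ I would apply 2) to the compactly supported (hence finite $p$-th moment) measure $\mu$ on $\R^d$ given by the law of $(V,0,\dots,0)$ with $V$ uniform on $[0,1]$; the resulting $\R^d$-valued random variable $\xi=(\xi_1,\dots,\xi_d)$ has first coordinate $U:=\xi_1$ uniformly distributed on $[0,1]$, which is exactly Assumption (H$_U$).

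For $1)\Rightarrow 4)$ I would fix $A\in\Fc$ with $\P(A)>0$ and consider $g(t):=\P(A\cap\{U\le t\})$ for $t\in[0,1]$. Since $0\le g(t)-g(s)\le \P(s<U\le t)=t-s$ whenever $s\le t$, the function $g$ is continuous with $g(0)=0$ and $g(1)=\P(A)$; by the intermediate value theorem there is $t^\ast$ with $g(t^\ast)=\P(A)/2$, and $B:=A\cap\{U\le t^\ast\}$ then satisfies $0<\P(B)<\P(A)$.

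For $1)\Rightarrow 3)$ the idea is that a single uniform variable already encodes an arbitrary target law. I would first extract from $U$ a countable family of i.i.d. uniform variables, for instance by regrouping the i.i.d. Bernoulli bits of the dyadic expansion of $U$ into countably many independent blocks, and then realize $\mu$ through the conditional-quantile (Rosenblatt) transform built from the regular conditional distributions of $\mu$ on $\R^d$; equivalently, one invokes the standard fact that every Borel probability measure $\mu$ on the Polish space $\R^d$ is the pushforward of the uniform measure on $[0,1]$ under some Borel map $T$, so that $\xi:=T(U)$ has law $\mu$. Observe that no moment assumption on $\mu$ is needed here, which is why this yields 3) and not merely 2).

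The crux is $4)\Rightarrow 1)$, the construction of a uniform variable on an atomless space. The main preliminary step is an intermediate-value lemma of Sierpi\'nski type: on an atomless space, for every $A$ with $\P(A)>0$ and every $c\in[0,\P(A)]$ there is a measurable $B\subseteq A$ with $\P(B)=c$. I would obtain this by an exhaustion/maximality argument, upgrading the merely strict splitting granted by 4) to exact values, using that repeated halving produces sets of arbitrarily small positive measure to exclude a maximal subset of measure strictly below $c$. With this lemma in hand I would build a dyadic tree of events: split $\Omega$ into two halves of measure $1/2$, split each half into two quarters, and so on, so that the $n$-th generation yields a Bernoulli$(1/2)$ bit $X_n$ independent of the previous ones; setting $U:=\sum_{n\ge 1}2^{-n}X_n$ then gives a variable uniform on $[0,1]$. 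Guaranteeing the independence of the successive bits, by always halving each cell of the current finite partition, and verifying that the resulting series is genuinely uniform is the delicate point; this is where I expect the real work, the remaining implications being either immediate or classical transfer results.
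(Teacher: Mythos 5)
Your proposal is correct, but it is organized quite differently from the paper's proof, which is largely by citation. The paper establishes $1)\Leftrightarrow 2)$ and $1)\Leftrightarrow 3)$ by pointing to the proof of Lemma 2.1 in the reference [CKGPR], and handles $4)\Rightarrow 1)$ by citing Proposition 9.1.11 of Bogachev; the only implication it proves by hand is $1)\Rightarrow 4)$, via the continuity of $F(x)=\P(A\cap\{U\leq x\})$. You instead close a cycle $3)\Rightarrow 2)\Rightarrow 1)\Rightarrow 3)$ and prove $1)\Leftrightarrow 4)$ separately, which is a genuinely different (and economical) decomposition: the trivial inclusion $3)\Rightarrow 2)$ does real work, and your $2)\Rightarrow 1)$ via the law of $(V,0,\dots,0)$ is a nice observation that the paper never needs to make explicit. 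Your $1)\Rightarrow 4)$ is essentially the paper's argument, but in fact slightly sharper: the paper checks continuity of $F$ by monotone limits and then only exhibits $b$ with $F(b)>0$, leaving implicit that one must also arrange $F(b)<\P(A)$, whereas your Lipschitz bound $0\le g(t)-g(s)\le t-s$ plus hitting the exact value $\P(A)/2$ settles both inequalities at once. For the two implications the paper outsources, you supply the standard underlying arguments: the pushforward/Rosenblatt representation of an arbitrary Borel law on $\R^d$ as $T(U)$ for $1)\Rightarrow 3)$, and the Sierpi\'nski exact-value lemma followed by the dyadic construction of i.i.d. fair bits for $4)\Rightarrow 1)$. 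Both are classical and correctly identified, though the latter is left at sketch level (you say yourself that the exhaustion argument and the independence/uniformity verification are "the real work"); since the paper itself replaces this implication by a citation, this is a comparable level of rigor, but if you wanted a fully self-contained proof that is the one place where details would still have to be written out. In short: the paper buys brevity by leaning on references; your route buys self-containedness and a cleaner logical structure at the cost of having to carry out (or at least sketch) two classical constructions.
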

\begin{proof}
The equivalence between items 1) and 2), and similarly between items 1) and 3), can be proved proceeding along the same lines as in the proof of \cite[Lemma 2.1]{CKGPR} (notice that in \cite[Lemma 2.1]{CKGPR} the probability $\mu$ has finite second moment; however, the same proof works for a generic probability $\mu$).\\
Let us prove the equivalence between items 1) and 4). The implication 4) $\Longrightarrow$ 1) follows from \cite[Proposition 9.1.11]{Bogachev}. On the other hand, let $A\in\Fc$ be such that $\P(A)>0$ and define $F\colon[0,1]\rightarrow[0,1]$ as $F(x)$ $=$ $\P(A\cap\{U\leq x\})$, $\forall\,x\in[0,1]$. Notice that $F(0)=0$ and $F(1)=\P(A)$. Moreover, if $x_n\downarrow x$ then $\lim_n F(x_n)=\lim_n\P(A\cap\{U\leq x_n\})=\P(A\cap\{U\leq x\})=F(x)$; similarly, if $x_n\uparrow x$ then $\lim_n F(x_n)=\lim_n\P(A\cap\{U\leq x_n\})=\P(A\cap\{U<x\})=\P(A\cap\{U\leq x\})=F(x)$. This shows that $F$ is a continuous function. Then, there exists $b\in(0,1)$ such that $F(b)>0$, that is $\P(B)>0$, where $B=A\cap\{U\leq b\}$.
\end{proof}

\noindent For every $d\in\N$, let $\mathscr P(\R^d)$ denote the family of all probabilities on $(\R^d,\mathscr B(\R^d))$.	We define
\[
\mathscr P_2(\R^d) \ := \ \left\{\mu \in \mathscr P(\R^d)\colon \int_{\R^d} |x|^2\,\mu(dx)<+\infty\right\}.
\]

\begin{Remark}
Since $(\Omega,\Fc,\P)$ satisfies Assumption \ref{A:H_U}, by Lemma \ref{L:H_U} it holds that
\begin{equation}\label{H_U}
\mathscr P_2(\R^d) \ = \ \big\{\P_\xi\colon\xi\in L^2(\Omega,\Fc,\P;\R^d)\big\}.
\end{equation}
\end{Remark}

\noindent Given $\mu,\nu\in\mathscr P_2(\R^d)$, a probability $\pi$ on $(\R^d\times\R^d,\mathscr B(\R^d\times\R^d))$ is called \emph{coupling} (or \emph{transport plan}) of $\mu$ and $\nu$ if $\pi$ has $\mu$ as first marginal and $\nu$ as second marginal, namely $\mu(B)$ $=$ $\pi(B\times\R^d)$ and $\nu(B)$ $=$ $\pi(\R^d\times B)$, for every $B\in\mathscr B(\R^d)$. We denote by $\Pi(\mu,\nu)$ the family of all couplings of $\mu$ and $\nu$. Notice that $\Pi(\mu,\nu)$ is non-empty since the product measure $\mu\otimes\nu$ belongs to $\Pi(\mu,\nu)$. We endow $\mathscr P_2(\R^d)$ with the $2$-Wasserstein metric
\begin{equation}\label{W2}
\Wc_2(\mu,\nu) \ := \ \inf_{\pi\in\Pi(\mu,\nu)}\bigg\{\bigg(\int_{\R^d\times\R^d}|x-y|^2\,\pi(dx,dy)\bigg)^{1/2}\bigg\}.
\end{equation}
It can be shown that $\Wc_2$ is a metric on $\mathscr P_2(\R^d)$ and $(\mathscr P_2(\R^d),\Wc_2)$ is a complete separable metric space, see \cite[Chapter 7]{AGS08} or \cite[Chapter 6]{Vi09}.

\begin{Remark}
Since $(\Omega,\Fc,\P)$ satisfies Assumption \ref{A:H_U}, by Lemma \ref{L:H_U} it holds that
\begin{equation}\label{W_2_xi_eta}
\Wc_2(\mu,\nu) \ = \ \inf_{\substack{\xi,\eta\in L^2\\\P_\xi=\mu,\,\P_\eta=\nu}}\|\xi-\eta\|_{L^2}.
\end{equation}
Then, given $\xi,\eta\in L^2(\Omega,\Fc,\P;\R^d)$ with laws $\mu$ and $\nu$, respectively, it follows directly from \eqref{W_2_xi_eta} that
\begin{equation}\label{EstimateW2}
\Wc_2(\mu,\nu) \ \leq \ \|\xi-\eta\|_{L^2}.
\end{equation}
\end{Remark}

\noindent We denote $\|\mu\|_2$ $:=$ $\Wc_2(\mu,\delta_0)$ $=$ $\big(\int_{\R^d}|x|^2\,\mu(dx)\big)^{1/2}$, for every $\mu\in\mathscr P_2(\R^d)$, where $\delta_0$ is the Dirac delta centered at $0$. We observe that if $\xi\in L^2(\Omega,\Fc,\P;\R^d)$ has distribution $\mu$, then
\begin{equation}\label{NormWass}
\|\mu\|_2 \ = \ \|\xi\|_{L^2}.
\end{equation}
We end this section with a useful technical result.
\begin{Lemma}
Let $n\in\N$ and $x_1,\ldots,x_n,y_1,\ldots,y_n\in\R^d$. Then
\begin{equation}\label{EstimateW2_bis}
\Wc_2\bigg(\frac{1}{n}\sum_{i=1}^n\delta_{x_i},\frac{1}{n}\sum_{i=1}^n\delta_{y_i}\bigg) \ \leq \ \sqrt{\frac{1}{n}\sum_{i=1}^n|x_i - y_i|^2}.
\end{equation}
\end{Lemma}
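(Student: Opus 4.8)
The plan is to exhibit an explicit coupling of the two empirical measures and to bound the transport cost by evaluating it on that coupling, then invoke the definition \eqref{W2} of $\Wc_2$ as an infimum. Writing $\mu := \frac1n\sum_{i=1}^n\delta_{x_i}$ and $\nu := \frac1n\sum_{i=1}^n\delta_{y_i}$, the natural candidate is the measure that pairs $x_i$ with $y_i$ for each index $i$, namely
\[
\pi \ := \ \frac{1}{n}\sum_{i=1}^n \delta_{(x_i,y_i)} \ \in \ \mathscr P(\R^d\times\R^d).
\]

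First I would check that $\pi\in\Pi(\mu,\nu)$. For any Borel set $B\subseteq\R^d$ one has $\pi(B\times\R^d) = \frac1n\#\{i:x_i\in B\} = \mu(B)$, and similarly $\pi(\R^d\times B)=\nu(B)$, so $\pi$ has the correct marginals. Next I would compute the transport cost along $\pi$, obtaining directly
\[
\int_{\R^d\times\R^d}|x-y|^2\,\pi(dx,dy) \ = \ \frac{1}{n}\sum_{i=1}^n|x_i-y_i|^2.
\]
Since $\Wc_2(\mu,\nu)$ is the infimum over all couplings in $\Pi(\mu,\nu)$ of the square root of such integrals, taking this particular $\pi$ yields
\[
\Wc_2(\mu,\nu) \ \leq \ \bigg(\int_{\R^d\times\R^d}|x-y|^2\,\pi(dx,dy)\bigg)^{1/2} \ = \ \sqrt{\frac{1}{n}\sum_{i=1}^n|x_i-y_i|^2},
\]
which is the claim.

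There is essentially no serious obstacle here: the only point requiring (minor) care is the marginal verification in the presence of repeated points. If some of the $x_i$ (or $y_i$) coincide, the empirical measures put mass larger than $1/n$ on the repeated atoms, but the counting identity $\pi(B\times\R^d)=\frac1n\#\{i:x_i\in B\}=\mu(B)$ is unaffected, so $\pi$ remains a valid coupling. Alternatively, one could give a probabilistic proof via \eqref{EstimateW2}, partitioning $[0,1]$ into $n$ subintervals of length $1/n$ through the uniform variable $U$ of Assumption \ref{A:H_U} and setting $\xi$, $\eta$ equal to $x_i$, $y_i$ on the $i$-th subinterval; then $\P_\xi=\mu$, $\P_\eta=\nu$, and $\|\xi-\eta\|_{L^2}^2=\frac1n\sum_{i=1}^n|x_i-y_i|^2$, so \eqref{EstimateW2} gives the result at once. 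The coupling argument is the cleaner of the two and avoids invoking the probabilistic representation.
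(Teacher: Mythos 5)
Your proof is correct, and it takes a genuinely different route from the paper's. The paper argues probabilistically: it takes a random variable $\xi$ with law $\frac{1}{n}\sum_{i=1}^n\delta_{x_i}$, defines a map $\psi$ on the atoms by $\psi(x_i)=y_i$, sets $\eta:=\psi(\xi)$, and concludes via the estimate \eqref{EstimateW2}. The underlying coupling is the same as yours --- the joint law of $(\xi,\psi(\xi))$ is exactly your $\pi=\frac{1}{n}\sum_{i=1}^n\delta_{(x_i,y_i)}$ --- but the implementations differ in two respects. First, you work directly from the definition \eqref{W2} of $\Wc_2$ as an infimum over couplings, so your argument needs no probability space, no Assumption \ref{A:H_U}, and no representation \eqref{W_2_xi_eta}; the paper's route relies on all of these. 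Second, and more substantively, your version is more robust: the paper's map $\psi$ is not well-defined in the repeated-points case you explicitly flag, namely when $x_i=x_j$ for some $i\neq j$ but $y_i\neq y_j$, since then $\psi(x_i)$ would have to equal two distinct values; moreover, even a single-valued choice of $\psi$ would not in general reproduce the cost $\frac{1}{n}\sum_{i=1}^n|x_i-y_i|^2$, because a pushforward by a function of $\xi$ cannot split the mass sitting at a repeated atom. Your coupling measure keeps track of indices rather than atom positions, so it is immune to this issue, and your alternative sketch (building $(\xi,\eta)$ jointly from a uniform variable on $n$ subintervals of $[0,1]$) is essentially the corrected form of the paper's probabilistic argument. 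In short, your proof not only establishes the lemma but quietly repairs a small gap in the paper's own proof.
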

\begin{proof}
	Let $\xi$ be a random variable with law $\frac{1}{n}\sum_{i=1}^n\delta_{x_i}$. Let us introduce the mapping $\psi\colon\{x_1,\ldots,x_n\}\to\{y_1,\ldots,y_n\}$ such that $\psi(x_i) = y_i$ for $i=1,\ldots n$. Then, the random variable $\eta:=\psi(\xi)$ has law $\frac{1}{n}\sum_{i=1}^n\delta_{y_i}$. Thus, from \eqref{EstimateW2} it follows that
	\begin{equation*}
		\Wc_2\bigg(\frac{1}{n}\sum_{i=1}^n\delta_{x_i},\frac{1}{n}\sum_{i=1}^n\delta_{y_i}\bigg) \ \leq \ \sqrt{\E\left[\lvert \xi - \eta\rvert^2\right]} = \ \sqrt{\E\left[\lvert \xi - \psi(\xi)\rvert^2\right]} = \ \sqrt{\frac{1}{n}\sum_{i=1}^n|x_i - y_i|^2}.
	\end{equation*}
\end{proof}

\section{Lions differentiability}
\label{S:LionsDiff}

There are different definitions of differentiability for functions defined on the Wasserstein space $\mathscr P_2(\R^d)$. In the present paper we consider the notion introduced by P.-L. Lions in the series of lectures \cite{Lions}. In this section we recall the main features of such a notion, while we refer to \cite[Chapter 5]{CD18_I} for more details.

\begin{Definition}
Let $u\colon\mathscr P_2(\R^d)\rightarrow\R$. The function $\tilde u\colon L^2(\Omega,\Fc,\P;\R^d)\rightarrow\R$ given by
\[
\tilde u(\xi) \ = \ u(\P_\xi), \qquad \forall\,\xi\in L^2(\Omega,\Fc,\P;\R^d),
\]
is said to be the \textbf{lifting} of $u$.
\end{Definition}
	
\begin{Definition}
Let $u\colon\mathscr P_2(\R^d)\rightarrow\R$ and $\mu_0\in\mathscr P_2(\R^d)$. The function $u$ is said to be \textbf{Lions differentiable} or $L$-\textbf{differentiable} at $\mu_0$ if there exists $\xi_0\in L^2(\Omega,\Fc,\P;\R^d)$, with $\P_{\xi_0}=\mu_0$, and $\tilde u$ is Fr\'echet differentiable at $\xi_0$.
\end{Definition}
	
\begin{Definition}
Let $u\colon\mathscr P_2(\R^d)\rightarrow\R$. Suppose that its lifting $\tilde u$ is everywhere Fr\'echet differentiable. The function $u$ is said to admit $L$-\textbf{derivative} if there exists a function $\partial_\mu u$ on $\mathscr P_2(\R^d)$, with $\mu_0\mapsto\partial_\mu u(\mu_0)(\cdot)\in L^2(\R^d,\mathscr B(\R^d),\mu_0;\R^d)$ and
\[
D\tilde u(\xi_0) \ = \ \partial_\mu u(\mu_0)(\xi_0), \qquad \P\text{-a.s.}
\]
for every $\xi_0\in L^2(\Omega,\Fc,\P;\R^d)$, $\P_{\xi_0}=\mu_0$. We refer to $\partial_\mu u$ as the $L$-\textbf{derivative} of $u$.
\end{Definition}
	
\begin{Proposition}
Let $u\colon\mathscr P_2(\R^d)\rightarrow\R$. Suppose that its lifting $\tilde u$ is everywhere Fr\'echet differentiable and $D\tilde u\colon L^2(\Omega,\Fc,\P;\R^d)\rightarrow L^2(\Omega,\Fc,\P;\R^d)$ is a continuous function. Then, for every $\mu_0\in\mathscr P_2(\R^d)$, there exists a Borel-measurable function $g_{\mu_0}\colon\R^d\rightarrow\R^d$ such that
\[
D\tilde u(\xi_0) \ = \ g_{\mu_0}(\xi_0), \qquad \P\text{-a.s.}
\]
for every $\xi_0\in L^2(\Omega,\Fc,\P;\R^d)$ with $\P_{\xi_0}=\mu_0$.
\end{Proposition}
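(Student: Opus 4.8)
The plan is to establish the representation in two stages: (i) for each fixed $\xi_0$ with law $\mu_0$, the gradient $D\tilde u(\xi_0)$ is $\sigma(\xi_0)$-measurable, so that by the Doob--Dynkin lemma $D\tilde u(\xi_0)=g_{\xi_0}(\xi_0)$ $\P$-a.s. for some Borel $g_{\xi_0}\colon\R^d\to\R^d$; and (ii) the function $g_{\xi_0}$ depends only on $\mu_0$, so one may set $g_{\mu_0}:=g_{\xi_0}$. The central device is an invariance, or \emph{transformation law}, for $D\tilde u$. If $\tau\colon\Omega\to\Omega$ is a measure-preserving bijection with measurable inverse, then $\xi\circ\tau$ and $\xi$ have the same law, so $\tilde u(\xi\circ\tau)=\tilde u(\xi)$ for every $\xi\in L^2$. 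The map $\Theta\colon\xi\mapsto\xi\circ\tau$ is a linear isometry of $L^2$ with $\Theta^{*}=\Theta^{-1}$; differentiating the identity $\tilde u\circ\Theta=\tilde u$ and using the Riesz identification of $D\tilde u$ with its gradient yields $D\tilde u(\xi\circ\tau)=\big(D\tilde u(\xi)\big)\circ\tau$ $\P$-a.s., for all $\xi\in L^2$.

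I would first treat simple random variables. Let $\xi_0=\sum_i x_i\mathbf 1_{A_i}$, where the $A_i$ are disjoint with $\P(A_i)>0$. Each $A_i$, being a subset of the atomless space $(\Omega,\Fc,\P)$ (atomlessness following from Assumption \ref{A:H_U} via Lemma \ref{L:H_U}), is itself atomless, so for any measure-preserving automorphism $\tau$ of $A_i$ extended by the identity off $A_i$ we have $\xi_0\circ\tau=\xi_0$, whence the transformation law gives $D\tilde u(\xi_0)\circ\tau=D\tilde u(\xi_0)$. A random variable on an atomless space that is invariant under all such rearrangements of a set of positive measure must be a.s. constant on that set; hence $D\tilde u(\xi_0)$ is constant on each $A_i$, i.e. $\sigma(\xi_0)$-measurable, and $D\tilde u(\xi_0)=g_{\xi_0}(\xi_0)$. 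Moreover, if $\xi_0'=\sum_i x_i\mathbf 1_{A_i'}$ has the same law, so that $\P(A_i')=\P(A_i)$, I would choose a measure-preserving bijection $\tau$ with $\tau(A_i')=A_i$ for every $i$; then $\xi_0\circ\tau=\xi_0'$, and the transformation law yields $D\tilde u(\xi_0')=g_{\xi_0}(\xi_0)\circ\tau=g_{\xi_0}(\xi_0')$, so that $g_{\xi_0'}=g_{\xi_0}$ $\mu_0$-a.e.

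The passage to arbitrary $\xi_0\in L^2$ is where the continuity of $D\tilde u$ enters. I would fix Borel maps $\phi_n\colon\R^d\to\R^d$ with finite range, $\phi_n(x)\to x$ and $|\phi_n(x)|\le|x|+1$, and put $\xi_0^{(n)}:=\phi_n(\xi_0)$, which is simple, satisfies $\sigma(\xi_0^{(n)})\subseteq\sigma(\xi_0)$, and converges to $\xi_0$ in $L^2$ by dominated convergence. By continuity, $D\tilde u(\xi_0^{(n)})\to D\tilde u(\xi_0)$ in $L^2$; each term lies in the closed subspace $L^2(\Omega,\sigma(\xi_0),\P;\R^d)$ by the simple case, hence so does the limit, giving $\sigma(\xi_0)$-measurability and $D\tilde u(\xi_0)=g_{\xi_0}(\xi_0)$. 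For the law-invariance, given $\xi_0,\xi_0'$ with common law $\mu_0$ I would use the \emph{same} maps $\phi_n$: by the simple case $D\tilde u(\xi_0^{(n)})=g^{(n)}(\xi_0^{(n)})$ and $D\tilde u((\xi_0')^{(n)})=g^{(n)}((\xi_0')^{(n)})$ with one and the same Borel $g^{(n)}$, so the pairs $(\xi_0^{(n)},D\tilde u(\xi_0^{(n)}))$ and $((\xi_0')^{(n)},D\tilde u((\xi_0')^{(n)}))$ have identical laws for every $n$. Letting $n\to\infty$, the $L^2$-convergence of both coordinates (again via continuity of $D\tilde u$) gives convergence in law, so $\mathrm{Law}(\xi_0,g_{\xi_0}(\xi_0))=\mathrm{Law}(\xi_0',g_{\xi_0'}(\xi_0'))$; since both pairs have first marginal $\mu_0$, this forces $g_{\xi_0}=g_{\xi_0'}$ $\mu_0$-a.e., and I set $g_{\mu_0}:=g_{\xi_0}$.

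The main obstacle I anticipate is entirely in the construction of the measure-preserving bijections of $\Omega$ used above — the rearrangements within a single level set and the matchings between equidistributed level sets — since atomlessness alone guarantees these only once one knows the relevant subspaces are mutually isomorphic as measure spaces. This is precisely the point at which the richness afforded by Assumption \ref{A:H_U} and Lemma \ref{L:H_U} must be invoked, either by reducing to a standard (Lebesgue--Rokhlin) realization or by building the required maps explicitly from the uniform variable $U$; I would also verify carefully the isometry identity $\Theta^{*}=\Theta^{-1}$ underlying the transformation law and the closedness of $L^2(\Omega,\sigma(\xi_0),\P;\R^d)$ that legitimizes passing $\sigma(\xi_0)$-measurability to the $L^2$-limit.
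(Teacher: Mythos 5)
Your proof skeleton---the unitary transformation law $D\tilde u(\xi\circ\tau)=\big(D\tilde u(\xi)\big)\circ\tau$, rearrangement invariance on the level sets of simple random variables, approximation $\phi_n(\xi_0)\to\xi_0$ combined with continuity of $D\tilde u$ and closedness of $L^2(\Omega,\sigma(\xi_0),\P;\R^d)$, and the final joint-law/graph identification of $g_{\mu_0}$---is essentially the strategy of the proof that the paper invokes, namely \cite[Proposition 5.25]{CD18_I} (the paper gives no argument of its own beyond this citation). The steps you actually carry out are correct: the adjoint computation $\Theta^*=\Theta^{-1}$, the passage of $\sigma(\xi_0)$-measurability to the $L^2$-limit, and the argument that two laws with first marginal $\mu_0$, each concentrated on the graph of a Borel function, force the two functions to agree $\mu_0$-a.e.

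The genuine gap is exactly the one you flag, and it is not a deferred technicality that can be ``verified carefully'' at the end: under the paper's standing assumption, Assumption \ref{A:H_U}, which by Lemma \ref{L:H_U} is equivalent to atomlessness and nothing more, the measure-preserving point bijections your argument needs may simply fail to exist. Your key lemma---``a random variable invariant under all measure-preserving rearrangements of a positive-measure set is a.s.\ constant on that set''---is false on some atomless spaces satisfying (H$_U$). Concretely, let $\Omega$ be the disjoint union of $[0,\tfrac12]$ with Lebesgue measure and of $\{0,1\}^{\omega_1}$ carrying half of the product measure: this space is atomless (hence satisfies (H$_U$)), but every measure-preserving bijection, and indeed every automorphism of the measure algebra, preserves each of the two pieces modulo null sets, because isomorphisms of measure algebras preserve Maharam type and the two pieces have different types (separable versus non-separable). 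Consequently a random variable taking one constant value on the first piece and a different one on the second is invariant under \emph{all} rearrangements, and your argument for the simple case (already for $\xi_0\equiv 0$) cannot conclude. The same obstruction defeats the matching step $\tau(A_i')=A_i$ between equidistributed simple random variables.

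Neither of your two proposed repairs closes this hole as stated. Maps ``built explicitly from the uniform variable $U$'' are not bijections of $\Omega$ unless $\sigma(U)=\Fc$ modulo null sets, since anything factoring through $U$ collapses the information in $\Fc\setminus\sigma(U)$; and ``reducing to a standard (Lebesgue--Rokhlin) realization'' presupposes that the Fr\'echet derivative of the lifting, computed on the given space, can be transported to another space---i.e., the invariance of the $L$-derivative under change of the underlying probability space, which is a result of the same depth and is itself proved in \cite{CD18_I} with the very rearrangement machinery in question. What actually makes the cited proof work is an additional standing hypothesis there: the probability space used for the lifting is taken Polish (hence standard), and then the isomorphism theorem for standard spaces supplies every bijection your argument requires---rearrangements within level sets and matchings between equidistributed partitions alike. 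So either such a standardness assumption must be imposed (as in \cite{CD18_I}), or the proposition on a general atomless space requires a different argument that avoids point rearrangements altogether (for instance, testing the first-order expansion of $\tilde u$ against equidistributed perturbations); as written, your proof establishes the statement only for standard spaces.
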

\begin{proof}
See \cite[Proposition 5.25]{CD18_I}.
\end{proof}

\begin{Proposition}
Let $u\colon\mathscr P_2(\R^d)\rightarrow\R$. Suppose that its lifting $\tilde u$ is everywhere Fr\'echet differentiable and $D\tilde u\colon L^2(\Omega,\Fc,\P;\R^d)\rightarrow L^2(\Omega,\Fc,\P;\R^d)$ is a continuous function. Then, there exists at most one map $\partial_\mu u\colon\mathscr P_2(\R^d)\rightarrow\R$ satisfying:
\begin{enumerate}[\upshape1)]
\item $\partial_\mu u$ is continuous on $\mathscr P_2(\R^d)\times\R^d$.
\item $\forall\,\xi_0\in L^2(\Omega,\Fc,\P;\R^d)$, $\P_{\xi_0}=\mu_0$,
\[
D\tilde u(\xi_0) \ = \ \partial_\mu u(\mu_0)(\xi_0), \qquad \P\text{-a.s.}
\]
\end{enumerate}
If such a function exists then we say that $u$ admits \textbf{continuous} $L$-\textbf{derivative}.
\end{Proposition}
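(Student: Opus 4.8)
The plan is to establish uniqueness in the usual way: assume that two maps, say $\partial_\mu^{(1)} u$ and $\partial_\mu^{(2)} u$, both enjoy properties 1) and 2), and prove that they agree at every pair $(\mu_0,x_0)\in\mathscr P_2(\R^d)\times\R^d$. The first step is to observe that property 2) already forces the two candidates to agree $\mu_0$-almost everywhere, for each fixed $\mu_0$. Indeed, choosing $\xi_0\in L^2(\Omega,\Fc,\P;\R^d)$ with $\P_{\xi_0}=\mu_0$ and subtracting the two identities $D\tilde u(\xi_0)=\partial_\mu^{(i)} u(\mu_0)(\xi_0)$ for $i=1,2$ gives $\partial_\mu^{(1)} u(\mu_0)(\xi_0)=\partial_\mu^{(2)} u(\mu_0)(\xi_0)$, $\P$-a.s.; pushing this equality forward by $\xi_0$ shows that the set $N_{\mu_0}:=\{x\in\R^d : \partial_\mu^{(1)} u(\mu_0)(x)\neq\partial_\mu^{(2)} u(\mu_0)(x)\}$ is $\mu_0$-null.

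The genuine difficulty is that a $\mu_0$-null exceptional set carries no information about a prescribed point $x_0$, unless $x_0$ happens to be charged with positive mass. To circumvent this, I would perturb $\mu_0$ so as to place an atom exactly at $x_0$ while staying arbitrarily close to $\mu_0$ in $\Wc_2$. Concretely, for $\eps\in(0,1)$ set
\[
\mu_\eps \ := \ (1-\eps)\,\mu_0 + \eps\,\delta_{x_0},
\]
which belongs to $\mathscr P_2(\R^d)$ since its second moment equals $(1-\eps)\|\mu_0\|_2^2+\eps|x_0|^2<+\infty$. To check that $\mu_\eps\to\mu_0$ in $\Wc_2$ as $\eps\downarrow0$, I would feed the explicit coupling $\pi_\eps:=(1-\eps)\,(\mathrm{Id},\mathrm{Id})_\#\mu_0+\eps\,(\delta_{x_0}\otimes\mu_0)\in\Pi(\mu_\eps,\mu_0)$ into the definition \eqref{W2}, obtaining
\[
\Wc_2(\mu_\eps,\mu_0)^2 \ \leq \ \eps\int_{\R^d}|x_0-y|^2\,\mu_0(dy) \ \longrightarrow \ 0.
\]

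Now I would apply the first step with $\mu_\eps$ in place of $\mu_0$: the two maps agree off a $\mu_\eps$-null set $N_{\mu_\eps}$. Since $\mu_\eps(\{x_0\})\geq\eps>0$, the point $x_0$ cannot belong to $N_{\mu_\eps}$ (otherwise $\mu_\eps(N_{\mu_\eps})\geq\eps>0$), and hence $\partial_\mu^{(1)} u(\mu_\eps)(x_0)=\partial_\mu^{(2)} u(\mu_\eps)(x_0)$ for every $\eps\in(0,1)$. Finally, invoking the joint continuity granted by property 1), together with $(\mu_\eps,x_0)\to(\mu_0,x_0)$ in $\mathscr P_2(\R^d)\times\R^d$, I would let $\eps\downarrow0$ to conclude that $\partial_\mu^{(1)} u(\mu_0)(x_0)=\partial_\mu^{(2)} u(\mu_0)(x_0)$. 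As $(\mu_0,x_0)$ is arbitrary, the two maps coincide, which proves the asserted uniqueness.

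I expect the crux of the argument to be precisely the passage from the almost-everywhere identity to a genuine pointwise identity: the atomic perturbation $\mu_\eps$ is exactly what renders the value at $x_0$ \emph{visible}, and the only subtlety is verifying that this perturbation still converges to $\mu_0$ in the Wasserstein metric, so that the continuity hypothesis in 1) may legitimately be invoked. The remaining verifications — membership in $\mathscr P_2(\R^d)$, the coupling cost estimate, and the limit — are routine.
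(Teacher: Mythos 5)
Your proposal is correct and follows essentially the same route as the paper, which gives no argument of its own but defers to \cite[Remark 5.82]{CD18_I}: there, as in your write-up, one passes from the $\mu_0$-a.e.\ identity forced by property 2) to a pointwise identity by perturbing with an atom, $\mu_\eps=(1-\eps)\mu_0+\eps\,\delta_{x_0}$, checking $\Wc_2(\mu_\eps,\mu_0)\to0$, and invoking the joint continuity in property 1) as $\eps\downarrow0$. The only implicit ingredient worth flagging is that the existence of $\xi_0$ with $\P_{\xi_0}=\mu_\eps$ uses the standing Assumption \ref{A:H_U} via Lemma \ref{L:H_U}, which the paper guarantees.
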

\begin{proof}
See \cite[Remark 5.82]{CD18_I}.
\end{proof}

\begin{Definition}
Let $C^1(\mathscr P_2(\R^d))$ be the class of continuous functions $u\colon\mathscr P_2(\R^d)\rightarrow\R$ which admit continuous $L$-derivative.
\end{Definition}
	
\noindent Now, let $u\colon\mathscr P_2(\R^d)\rightarrow\R$ be of class $C^1(\mathscr P_2(\R^d))$. Then, we can consider the derivatives of the function $(x,\mu)\mapsto u(\mu)(x)$, that is the second-order derivatives of $u$:
\begin{itemize}
\item for every fixed $\mu\in\mathscr P_2(\R^d)$, the derivative of the function $x\mapsto u(\mu)(x)$, denoted by $\partial_x\partial_\mu u(\mu)(x)$:
\[
\partial_x\partial_\mu u\colon\mathscr P_2(\R^d)\times\R^d \ \longrightarrow \ \R^{d\times d};
\]
\item for every fixed $x\in\R^d$, the derivative of the function $\mu\mapsto u(\mu)(x)$, denoted by $\partial_\mu^2 u(\mu)(x,y)$:
\[
\partial_\mu^2 u\colon\mathscr P_2(\R^d)\times\R^d\times\R^d \ \longrightarrow \ \R^{d\times d}.
\] 
\end{itemize}

\begin{Definition}
Let $C^2(\mathscr P_2(\R^d))$ be the class of functions $u\in C^1(\mathscr P_2(\R^d))$ which admit continuous second-order derivatives $\partial_x\partial_\mu u$ and $\partial_\mu^2 u$.
\end{Definition}

\noindent Similarly, considering higher-order derivatives we can give the definition of the class $C^k(\mathscr P_2(\R^d))$, for every $k\in\N$. As an example, we report all the third-order derivatives:
\begin{align*}
\partial^2_x\partial_\mu u\colon\hspace{1.74cm}\mathscr P_2(\R^d)\times\R^d \ &\longrightarrow \ \R^{d\times d\times d},\\
\partial_\mu\partial_x\partial_\mu u\colon\hspace{8.7mm}\mathscr P_2(\R^d)\times\R^d\times\R^d \ &\longrightarrow \ \R^{d\times d\times d},\\
\partial_{x} \partial_\mu^2 u\colon\hspace{8.7mm}\mathscr P_2(\R^d)\times\R^d\times\R^d \ &\longrightarrow \ \R^{d\times d\times d},\\
\partial_{y}\partial_\mu^2 u\colon\hspace{8.7mm}\mathscr P_2(\R^d)\times\R^d\times\R^d \ &\longrightarrow \ \R^{d\times d\times d},\\
\partial_\mu^3 u\colon\mathscr P_2(\R^d)\times\R^d\times\R^d\times\R^d \ &\longrightarrow \ \R^{d\times d\times d}.
\end{align*}
For a careful analysis of higher-order Lions derivatives beyond order 2 we refer to the recent paper \cite{salkeld}, where a clever multi-index notation is introduced (see \cite[Section 2.1]{salkeld}) and also a Schwarz theorem for multivariate Lions derivatives is provided, see \cite[Theorem 3.9]{salkeld} (for the second-order case see \cite[Corollary 5.89]{CD18_I} and \cite[Remark 4.16]{CD18_II}). Finally, we give the following definition.

\begin{Definition}
Let $C^\infty(\mathscr P_2(\R^d))$ be the class of continuous functions $u\colon\mathscr P_2(\R^d)\rightarrow\R$ which admit continuous derivatives of any order.
\end{Definition}

\section{Smooth approximations in the Wasserstein space}
\label{S:SmoothApprox}

We begin recalling the following classical definition.

\begin{Definition}
We say that $w\colon[0,\infty)\rightarrow[0,\infty)$ is a modulus of continuity if $w$ is continuous, non-decreasing, concave, and $w(0)=0$.
\end{Definition}

\noindent We can now state one of the main results of the paper.

\begin{Theorem}\label{T:Approx}
Let $u\colon\mathscr P_2(\R^d)\rightarrow\R$ be a continuous function. Then, there exists a sequence $\{u_k\}_k$ in $C^\infty(\mathscr P_2(\R^d))$ such that
\begin{equation}\label{Conv_u_n_u}
\lim_{k\rightarrow\infty} \sup_{\mu\in\mathscr M} |u_k(\mu) - u(\mu)| \ = \ 0,
\end{equation}
for any compact set $\mathscr M\subset\mathscr P_2(\R^d)$. Moreover, if $u$ is uniformly continuous with modulus of continuity $w$ then every function $u_k$ is also uniformly continuous with the same modulus of continuity $w$.
\end{Theorem}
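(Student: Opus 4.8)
The plan is to build the approximating sequence $\{u_k\}_k$ by composing $u$ with the empirical-measure map and then mollifying in finite dimensions, exploiting the lift to $L^2$.

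First, I would pass to the finite-dimensional picture. For each $n\in\N$ define the \emph{empirical projection} $E_n\colon(\R^d)^n\to\mathscr P_2(\R^d)$ by $E_n(x_1,\dots,x_n)=\frac1n\sum_{i=1}^n\delta_{x_i}$, and set $v_n:=u\circ E_n\colon(\R^d)^n\to\R$. By Lemma \ref{EstimateW2_bis} the map $E_n$ is Lipschitz from $(\R^d)^n$ (with the normalized Euclidean norm $|\cdot|_n$, $|x|_n^2=\frac1n\sum_i|x_i|^2$) into $(\mathscr P_2(\R^d),\Wc_2)$, so $v_n$ is continuous, and it inherits whatever modulus of continuity $u$ has. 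The key approximation fact I would establish (or recall from the references \cite{CD18_I,Martini1}) is that the empirical measure of $n$ i.i.d.\ samples from $\mu$ converges to $\mu$ in $\Wc_2$, so that $u\bigl(\frac1n\sum_i\delta_{\xi_i}\bigr)\to u(\mu)$; this is what lets the finite-dimensional surrogates recover $u$ in the limit.

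Second, I would mollify $v_n$ in $(\R^d)^n$ against a smooth compactly supported kernel $\rho_\eps$, producing $v_n^\eps:=v_n*\rho_\eps\in C^\infty((\R^d)^n)$ with $v_n^\eps\to v_n$ locally uniformly as $\eps\to0$, and crucially with the same modulus of continuity (since convolution with a probability kernel preserves a concave modulus: $|v_n^\eps(x)-v_n^\eps(y)|\le\int|v_n(x-z)-v_n(y-z)|\rho_\eps(z)\,dz\le w(|x-y|_n)$). Then I would \emph{symmetrize} $v_n^\eps$ over the permutations of the $n$ blocks, so that it factors through the empirical measure; this symmetry is exactly what guarantees the candidate $u_k$ is well defined on $\mathscr P_2(\R^d)$ rather than merely on $(\R^d)^n$. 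The lifted function $\tilde u_k(\xi)=u_k(\P_\xi)$ is then smooth in the Fréchet sense because it is built from smooth symmetric functions of the sample points, yielding $u_k\in C^\infty(\mathscr P_2(\R^d))$; verifying Lions-smoothness here is the most delicate bookkeeping, as one must check that the finite-dimensional smoothness transfers correctly through the lift and that derivatives of all orders exist and are continuous.

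Third, I would choose a joint rate, say $u_k$ corresponding to $n=n_k\to\infty$ and $\eps=\eps_k\to0$, and prove the two claims. For \eqref{Conv_u_n_u}, I would fix a compact $\mathscr M\subset\mathscr P_2(\R^d)$, use uniform integrability of the second moments on $\mathscr M$ (compactness in $\Wc_2$ forces tightness plus uniform integrability of $|x|^2$) to obtain a \emph{uniform} empirical-approximation rate $\sup_{\mu\in\mathscr M}\E[\Wc_2(\frac1n\sum_i\delta_{\xi_i},\mu)^2]\to0$, and combine this with the mollification error to drive $\sup_{\mathscr M}|u_k-u|$ to $0$. For the modulus-of-continuity statement, I would estimate $|u_k(\mu)-u_k(\nu)|$ by lifting to optimally coupled samples $\xi_i,\eta_i$ and using Lemma \ref{EstimateW2_bis} together with the preserved modulus of $v_n^\eps$, concluding $|u_k(\mu)-u_k(\nu)|\le w(\Wc_2(\mu,\nu))$ with the \emph{same} $w$.

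The main obstacle I anticipate is twofold: first, obtaining a genuinely \emph{uniform-over-compacts} empirical convergence rate (the naive $\Wc_2$ convergence of empirical measures is pointwise in $\mu$, and upgrading it uniformly on $\mathscr M$ requires the uniform-integrability estimate above); and second, the careful verification that the symmetrized, mollified surrogate is Lions-$C^\infty$, i.e.\ that Fréchet differentiability of the lift of all orders genuinely holds and that the resulting $L$-derivatives are continuous on $\mathscr P_2(\R^d)\times\R^d$ (and higher products). Preserving the exact modulus of continuity $w$ through both the symmetrization and the mollification — rather than merely an inflated modulus — is the point that distinguishes this construction from the one in \cite{MZ}, and I would be careful to keep every estimate tight (norm $|\cdot|_n$ matched to $\Wc_2$ via Lemma \ref{EstimateW2_bis}) so that no constant worse than $1$ creeps in.
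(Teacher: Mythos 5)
There is a genuine gap at the heart of your construction, in the step where you claim that symmetrizing $v_n^\eps$ over permutations makes ``the candidate $u_k$ well defined on $\mathscr P_2(\R^d)$ rather than merely on $(\R^d)^n$.'' Symmetry only makes a function well defined on the \emph{image} of the empirical projection $E_n$, i.e.\ on the set of uniform measures $\frac1n\sum_{i=1}^n\delta_{x_i}$ supported on at most $n$ points. That set is a closed, nowhere dense, proper subset of $\mathscr P_2(\R^d)$, so at this stage you have no candidate function on the whole Wasserstein space at all; worse, Lions differentiability cannot even be formulated for a function defined only on that subset, since it requires the lift to be Fr\'echet differentiable on the open space $L^2(\Omega,\Fc,\P;\R^d)$. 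The missing idea — and the key device in the paper's proof — is to define $u_k(\mu)$ by integrating the mollified finite-dimensional function against the product measure $\mu^{\otimes n}$, i.e.\ $u_k(\mu)=\E\big[h(\xi_1,\ldots,\xi_n)\big]$ with $\xi_1,\ldots,\xi_n$ i.i.d.\ with law $\mu$ (equivalently, evaluating $u$ at the empirical measure of the samples perturbed by independent Gaussian noise). This produces a genuinely globally defined function, whose Lions smoothness of all orders follows by differentiating under the integral via \cite[Proposition 5.35 and formula (5.37)]{CD18_I}, and your modulus-of-continuity argument (optimal couplings, \eqref{EstimateW2_bis}, Jensen with concave $w$) then goes through exactly as in the paper's Step III.

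A second, smaller gap: you skip the paper's truncation step $(u\star\gamma_k)(\mu)=u(\mu\circ\gamma_k^{-1})$ with $\gamma_k$ smooth, compactly supported, $|D_x\gamma_k|\leq1$. Once $u_k$ is correctly defined via $\mu^{\otimes n}$, this step is not optional for merely continuous $u$: continuity on $\mathscr P_2(\R^d)$ imposes no growth bound, so e.g.\ for $u(\nu)=\exp(\|\nu\|_2^2)$ and a heavy-tailed $\mu\in\mathscr P_2(\R^d)$ the expectation $\E\big[\exp\big(\frac1n\sum_i|\xi_i|^2\big)\big]$ is infinite, and your $u_k(\mu)$ would not exist; the composition with $\gamma_k$ makes $u\star\gamma_k$ bounded (preserving $w$ since $|D_x\gamma_k|\leq1$) and is also what allows dominated convergence in the limits $m\to\infty$, $n\to\infty$. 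Finally, your worry about needing a \emph{uniform} empirical convergence rate over compact $\mathscr M$ can be dispensed with: the paper never proves such a rate, but instead combines the pointwise convergences with the equicontinuity you already established, extracting diagonal subsequences and applying Ascoli--Arzel\`a to upgrade pointwise convergence to uniform convergence on compacts — a softer route than the quantitative uniform Glivenko--Cantelli statement you propose.
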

\begin{proof}
We split the proof into four steps.

\vspace{2mm}

\noindent\emph{Step I. Definition of $u\star\gamma_k$.} We proceed as in \cite[Lemma 5.95]{CD18_I} and consider a sequence $\{\gamma_k\}_k$ of $C^\infty$ functions $\gamma_k\colon\R^d\rightarrow\R^d$ having compact support and such that $(\gamma_k,D_x\gamma_k,D_x^2\gamma_k)(x)\rightarrow(x,I_d,0)$ uniformly on compact sets of $\R^d$ as $k\rightarrow\infty$, where $I_d$ stands for the identity matrix of order $d$ and $0$ denotes the zero matrix of dimension $d\times d$. We also suppose that $|D_x\gamma_k(x)|\leq1$, for every $k\in\N$ and $x\in\R^d$. Then, we define
\[
(u\star\gamma_k)(\mu) \ = \ u(\mu\circ\gamma_k^{-1}), \qquad \forall\,\mu\in\mathscr P_2(\R^d).
\]
Notice that $u\star\gamma_k$ is continuous and bounded (see \cite[Lemma 5.94]{CD18_I}). Moreover, if $u$ is uniformly continuous with modulus $w$ then $u\star\gamma_k$ is also uniformly continuous with the same modulus of continuity. As a matter of fact, it holds that
\[
\big|u(\mu\circ\gamma_k^{-1}) - u(\nu\circ\gamma_k^{-1})\big| \ \leq \ w\big(\Wc_2(\mu\circ\gamma_k^{-1},\nu\circ\gamma_k^{-1})\big).
\]
Recalling \eqref{W_2_xi_eta} we find
\[
\Wc_2(\mu\circ\gamma_k^{-1},\nu\circ\gamma_k^{-1}) \ = \ \inf_{\substack{\xi,\eta\in L^2\\\P_\xi=\mu,\,\P_\eta=\nu}}\|\gamma_k(\xi)-\gamma_k(\eta)\|_{L^2}.
\]
Since $|D_x\gamma_k(x)|\leq1$, for every $k\in\N$ and $x\in\R^d$, we get $\|\gamma_k(\xi)-\gamma_k(\eta)\|_{L^2}\leq\|\xi-\eta\|_{L^2}$, from which we conclude that
\begin{equation}\label{UC_u_rho_k}
\big|(u\star\gamma_k)(\mu) - (u\star\gamma_k)(\nu)\big| \ = \ \big|u(\mu\circ\gamma_k^{-1}) - u(\nu\circ\gamma_k^{-1})\big| \ \leq \ w\big(\Wc_2(\mu,\nu)\big).
\end{equation}

\vspace{2mm}

\noindent\emph{Step II. Definitions of $v_{k,n},v_{k,n,m}$ and pointwise convergence \eqref{Conv_n_m}.} For every $k,n\in\N$, let $v_{k,n}\colon\mathscr P_2(\R^d)\rightarrow\R$ be given by
\begin{equation}\label{v_n}
v_{k,n}(\mu) \ = \ \E\bigg[(u\star\gamma_k)\bigg(\frac{1}{n}\sum_{i=1}^n\delta_{\xi_i}\bigg)\bigg], \qquad \forall\,\mu\in\mathscr P_2(\R^d),
\end{equation}
where $\xi_1,\ldots,\xi_n$ are independent and identically distributed $\R^d$-valued random variables on $(\Omega,\Fc,\P)$ with distribution $\mu$. Let also $\rho\colon\R^d\rightarrow\R$ be the probability density function of the standard multivariate normal distribution, that is $\rho(x)$ $=$ $\frac{1}{(2\pi)^{d/2}}\,\textup{e}^{-\frac{1}{2}|x|^2}$, $\forall\,x\in\R^d$. For every $m\in\N$, let $\rho_m(x)=m^d\rho(mx)$, $\forall\,x\in\R^d$, and, given $n\in\N$, define $h_{k,n,m}\colon\R^{dn}\rightarrow\R$ as
\[
h_{k,n,m}(x_1,\ldots,x_n) \ = \ \int_{\R^{dn}}(u\star\gamma_k)\bigg(\frac{1}{n}\sum_{i=1}^n\delta_{x_i-y_i}\bigg)\rho_m(y_1)\cdots\rho_m(y_n)\,dy_1\cdots dy_n,
\]
for every $x_1,\ldots,x_n\in\R^d$. Notice that $h_{k,n,m}$ is of class $C^\infty(\R^{dn})$. Now, define $v_{k,n,m}\colon\mathscr P_2(\R^d)$ $\rightarrow\R$ as follows
\[
v_{k,n,m}(\mu) = \E\big[h_{k,n,m}(\xi_1,\ldots,\xi_n)\big] = \!\int_{\R^{dn}}\!\!\!\!\E\bigg[(u\star\gamma_k)\bigg(\frac{1}{n}\sum_{i=1}^n\delta_{\xi_i-y_i}\bigg)\bigg]\rho_m(y_1)\cdots\rho_m(y_n)dy_1\cdots dy_n,
\]
for every $\mu\in\mathscr P_2(\R^d)$, where $\xi_1,\ldots,\xi_n$ are independent and identically distributed $\R^d$-valued random variables on $(\Omega,\Fc,\P)$ with distribution $\mu$. Notice that $v_{k,n,m}$ is also given by
\[
v_{k,n,m}(\mu) \ = \ \E\bigg[(u\star\gamma_k)\bigg(\frac{1}{n}\sum_{i=1}^n\delta_{\xi_i-Z_i}\bigg)\bigg], \qquad \forall\,\mu\in\mathscr P_2(\R^d),
\]
where $Z_1,\ldots,Z_n$ are independent and identically distributed $\R^d$-valued random variables on $(\Omega,\Fc,\P)$ having multivariate normal distribution $\Nc(0,m^2 I_d)$ ($I_d$ denotes the identity matrix of order $d$) and being independent of $\xi_1,\ldots,\xi_n$. It is easy to see that $v_{k,n,m}\in C^\infty(\mathscr P_2(\R^d))$.  As a matter of fact, from \cite[formula (5.37)]{CD18_I} it follows that
\begin{align*}
\partial_\mu v_{k,n,m} (\mu)(x_1)  \ &= \ n\E\big[ \partial_{x_1}h_{k,n,m}(x_1,\xi_2,\dots,\xi_n)\big],\\
\partial^2_\mu v_{k,n,m} (\mu)(x_1,x_2) \ &= \ n (n-1)\E\big[ \partial_{x_2}\partial_{x_1}h_{k,n,m}(x_1,x_2,\xi_3,\dots,\xi_n)\big],\\
&\,\,\,\vdots\\
\partial_\mu^n v_{k,n,m} (\mu)(x_1,\dots,x_n) \ &= \ n!\,\partial_{x_n}\cdots\partial_{x_1}h_{k,n,m}(x_1,\dots,x_n),\\
\partial_\mu^{n+1} v_{k,n,m} (\mu)(x_1,\dots,x_n, x_{n+1}) \ &= \ 0.
\end{align*}
Moreover, from the formulae above we deduce that derivatives with respect to the spatial variables $x_1,\ldots,x_n,\ldots$ are also smooth, thanks to the smoothness of $h_{k,n,m}$. Furthermore, we observe that any higher-order Lions derivative involving more than $n$ derivatives in measure is identically equal to zero. Finally, since $h_{k,n,m}$ has compact support, all Lions derivatives of $v_{k,n,m}$ are bounded, although not necessarily uniformly with respect to $k,n,m$.\\

Now, notice that, for every $\mu\in\mathscr P_2(\R^d)$,
\begin{equation}\label{Conv_n_m}
\lim_{m\rightarrow\infty}v_{k,n,m}(\mu) \ = \ v_{k,n}(\mu), \qquad\qquad \lim_{n\rightarrow\infty}v_{k,n}(\mu) \ = \ (u\star\gamma_k)(\mu).
\end{equation}
As a matter of fact, regarding the first limit in \eqref{Conv_n_m} we begin noting that
\begin{align*}
h_{k,n,m}(x_1,\ldots,x_n) \ = \ \int_{\R^{dn}}(u\star\gamma_k)\bigg(\frac{1}{n}\sum_{i=1}^n\delta_{x_i-y_i}\bigg)\rho_m(y_1)\cdots\rho_m(y_n)\,dy_1\cdots dy_n \\
\overset{m\rightarrow\infty}{\longrightarrow} \ (u\star\gamma_k)\bigg(\frac{1}{n}\sum_{i=1}^n\delta_{x_i}\bigg),
\end{align*}
for every $x_1,\ldots,x_n\in\R^d$. Then, by the boundedness of $u\star\gamma_k$, we find
\[
\E\big[h_{k,n,m}(\xi_1,\ldots,\xi_n)\big] \ \overset{m\rightarrow\infty}{\longrightarrow} \ \E\bigg[(u\star\gamma_k)\bigg(\frac{1}{n}\sum_{i=1}^n\delta_{\xi_i}\bigg)\bigg].
\]
Similarly, the second limit in \eqref{Conv_n_m}, that is $v_{k,n}(\mu) - u_k(\mu)$ $=$ $\E\big[u\big(\frac{1}{n}\sum_{i=1}^n\delta_{\xi_i}\big)\big] - u(\mu)$ $\overset{n\rightarrow\infty}{\longrightarrow}$ $0$, follows from the continuity of $u$ and the Glivenko-Cantelli convergence in the Wasserstein distance (see \cite[Section 5.1.2]{CD18_I}), namely from $\lim_{n\rightarrow\infty}\Wc_2\big(\frac{1}{n}\sum_{i=1}^n\delta_{\xi_i},\mu\big)$ $=$ $0$, $\P$-a.s.

\vspace{2mm}

\noindent\emph{Step III. The case with $u$ uniformly continuous.} Let us now suppose that $u$ is uniformly continuous. Recall from Step I that $u\star\gamma_k$ is also uniformly continuous with the same modulus $w$. Let $\mu,\nu\in\mathscr P_2(\R^d)$ and consider $n$ independent and identically distributed $\R^d\times\R^d$-valued random variables $(\xi_1,\eta_1),\ldots,(\xi_n,\eta_n)$, with $\xi_i$ (resp. $\eta_i$) having distribution $\mu$ (resp. $\nu$). Then (recalling \eqref{UC_u_rho_k})
\begin{align*}
&|v_{k,n,m}(\mu) - v_{k,n,m}(\nu)| \\
&\leq \ \int_{\R^{dn}}\E\bigg[\bigg|(u\star\gamma_k)\bigg(\frac{1}{n}\sum_{i=1}^n\delta_{\xi_i-y_i}\bigg) - (u\star\gamma_k)\bigg(\frac{1}{n}\sum_{i=1}^n\delta_{\eta_i-y_i}\bigg)\bigg|\bigg]\rho_m(y_1)\cdots\rho_m(y_n)\,dy_1\cdots dy_n \\
&\leq \ \int_{\R^{dn}}\E\bigg[w\bigg(\Wc_2\bigg(\frac{1}{n}\sum_{i=1}^n\delta_{\xi_i-y_i},\frac{1}{n}\sum_{i=1}^n\delta_{\eta_i-y_i}\bigg)\bigg)\bigg]\rho_m(y_1)\cdots\rho_m(y_n)\,dy_1\cdots dy_n
\end{align*}
Now, by \eqref{EstimateW2_bis} and Jensen's inequality (recall that $w$ is concave), we find
\[
\E\bigg[w\bigg(\Wc_2\bigg(\frac{1}{n}\sum_{i=1}^n\delta_{\xi_i-y_i},\frac{1}{n}\sum_{i=1}^n\delta_{\eta_i-y_i}\bigg)\bigg)\bigg] \ \leq \  w\Bigg(\sqrt{\frac{1}{n}\sum_{i=1}^n\|\xi_i - \eta_i\|_{L^2}^2}\Bigg).
\]
Recalling that $(\xi_1,\eta_1),\ldots,(\xi_n,\eta_n)$ are identically distributed, we conclude that $|v_{k,n,m}(\mu) - v_{k,n,m}(\nu)|$ $\leq$ $w(\|\xi_1-\eta_1\|_{L^2})$. From the arbitrariness of $\xi_1,\eta_1$, we find $|v_{k,n,m}(\mu) - v_{k,n,m}(\nu)|$ $\leq$ $w(\|\xi-\eta\|_{L^2})$, $\forall\,\xi,\eta\in L^2$, with $\P_\xi=\mu$ and $\P_\eta=\nu$. By \eqref{W_2_xi_eta} there exists a sequence $\{(\xi_k,\eta_k)\}_k\in L^2(\Omega,\Fc,\P;\R^d)\times L^2(\Omega,\Fc,\P;\R^d)$, with $\P_{\xi_k}=\mu$ and $\P_{\eta_k}=\nu$, such that $\Wc_2(\mu,\nu)=\lim_{k\rightarrow\infty}\|\xi_k-\eta_k\|_{L^2}$. From the continuity of $w$ we conclude that $|v_{k,n,m}(\mu) - v_{k,n,m}(\nu)|$ $\leq$ $w\big(\Wc_2(\mu,\nu)\big)$.

\vspace{2mm}

\noindent\emph{Step IV. Uniform convergence on compact sets and definition of $u_k$.} In this final step we suppose that $u$ is continuous, but not necessarily uniformly continuous, and let $\mathscr M$ be a compact subset of $\mathscr P_2(\R^d)$. Notice that the restriction of $u$ to $\mathscr M$ is uniformly continuous. Then, by the same argument as in Step III we deduce that the double sequence $\{v_{k,n,m}\}_{n,m}$ is equicontinuous on $\mathscr M$. As a consequence, recalling also the convergences \eqref{Conv_n_m}, we deduce from \cite[Lemma D.1]{CossoRusso} that, for every fixed $k\in\N$, there exists a subsequence $\{v_{k,n,m_n}\}_n$ which converges pointwise to $u\star\gamma_k$ on $\mathscr P_2(\R^d)$. Now, notice that the following convergences hold
\[
\lim_{n\rightarrow\infty}v_{k,n,m_n}(\mu) \ = \ (u\star\gamma_k)(\mu), \qquad\qquad \lim_{k\rightarrow\infty}(u\star\gamma_k)(\mu) \ = \ u(\mu), \qquad \forall\,\mu\in\mathscr P_2(\R^d).
\]
Then, we can apply again \cite[Lemma D.1]{CossoRusso} and deduce the existence of a subsequence $\{u_k\}_k:=\{v_{k,n_k,m_{n_k}}\}_k$.
Since $\{u_k\}_k$ is equicontinuous (and clearly also equibounded) on every compact subset $\mathscr M$ of $\mathscr P_2(\R^d)$, by the Ascoli-Arzel\`a theorem it follows that \eqref{Conv_u_n_u} holds.
\end{proof}

\begin{Remark}
Let $u\colon\mathscr P_2(\R^d)\rightarrow\R$ be Lipschitz continuous:
\[
|u(\mu) - u(\nu)| \ \leq \ L\Wc_2(\mu,\nu), \qquad \forall\,\mu,\nu\in\mathscr P_2(\R^d),
\]
for some constant $L\geq0$. Then, it follows from Theorem \ref{T:Approx} that there exists a sequence $\{u_k\}_k$ in $C^\infty(\mathscr P_2(\R^d))$ such that
\[
|u_k(\mu) - u_k(\nu)| \ \leq \ L\Wc_2(\mu,\nu), \qquad \forall\,\mu,\nu\in\mathscr P_2(\R^d),\,\forall\,k\in\N
\]
and
\[
\lim_{k\rightarrow\infty} \sup_{\mu\in\mathscr M} |u_k(\mu) - u(\mu)| \ = \ 0,
\]
for any compact set $\mathscr M\subset\mathscr P_2(\R^d)$.
\end{Remark}

\noindent We end this section proving another main result of the paper, concerning the approximation of first and second-order derivatives.

\begin{Theorem}\label{T:Approx_Deriv}
Let $u\colon\mathscr P_2(\R^d)\rightarrow\R$ be a continuous function. Then, there exists a sequence $\{u_k\}_k\subset C^\infty(\mathscr P_2(\R^d))$ such that \eqref{Conv_u_n_u} holds, moreover 
\begin{enumerate}[\upshape1)]
\item if $u\in C^1(\mathscr P_2(\R^d))$ then
\begin{equation}\label{Conv_der}
\lim_{k\rightarrow\infty} \sup_{(\mu,x)\in\mathscr M\times K} |\partial_\mu u_k(\mu)(x) - \partial_\mu u(\mu)(x)| \ = \ 0, \qquad\qquad 
\end{equation}
for any compact sets $\mathscr M\subset\mathscr P_2(\R^d)$ and $K\subset\R^d$;
\item if in addition $u\in C^2(\mathscr P_2(\R^d))$ then
\begin{align}\label{Conv_der_2nd}
\lim_{k\rightarrow\infty} \sup_{(\mu,x,y)\in\mathscr M\times K \times K} \Big(&\big|\partial_x\partial_\mu u_k(\mu)(x) - \partial_x\partial_\mu u(\mu)(x)\big| \\
&+ \big|\partial_\mu^2 u_k(\mu)(x,y) - \partial_\mu^2 u(\mu)(x,y)\big|\Big) \ = \ 0, \notag
\end{align}
for any compact sets $\mathscr M\subset\mathscr P_2(\R^d)$ and $K\subset\R^d$.
\end{enumerate}
\end{Theorem}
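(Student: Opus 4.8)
The plan is to reuse verbatim the three-level approximating family $v_{k,n,m}$ built in the proof of Theorem \ref{T:Approx}, and to show that at each of the three levels the first-order (and, when $u\in C^2(\mathscr P_2(\R^d))$, the second-order) Lions derivatives converge to those of $u$. One then runs the same diagonal extraction as in Step IV, but applied simultaneously to $v_{k,n,m}$ and to all of its derivatives, so that a single subsequence $u_k=v_{k,n_k,m_{n_k}}$ realizes at once \eqref{Conv_u_n_u}, \eqref{Conv_der} and \eqref{Conv_der_2nd}. Since the construction is identical to that of Theorem \ref{T:Approx}, the function-level conclusion \eqref{Conv_u_n_u} is inherited automatically.

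First I would record the derivative formulas level by level. At the mollification level, the chain rule for Lions derivatives applied to $(u\star\gamma_k)(\mu)=u(\mu\circ\gamma_k^{-1})$ gives
\[
\partial_\mu(u\star\gamma_k)(\mu)(x) \ = \ [D_x\gamma_k(x)]\trans\,\partial_\mu u(\mu\circ\gamma_k^{-1})(\gamma_k(x)),
\]
together with the analogous second-order identities involving $D_x\gamma_k$ and $D_x^2\gamma_k$. Because $(\gamma_k,D_x\gamma_k,D_x^2\gamma_k)\to(x,I_d,0)$ uniformly on compacts, $\mu\circ\gamma_k^{-1}\to\mu$ in $\Wc_2$, and the derivatives of $u$ are continuous, these converge pointwise to $\partial_\mu u$, $\partial_x\partial_\mu u$ and $\partial_\mu^2 u$, the terms carrying $D_x^2\gamma_k$ vanishing in the limit. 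At the empirical and Gaussian levels I would invoke the empirical projection identities $\partial_{x_i}\Phi^n(\vec x)=\frac1n\partial_\mu\Phi(\frac1n\sum_l\delta_{x_l})(x_i)$ and their second-order counterparts (see \cite[Chapter 5]{CD18_I}), combined with the rule that for symmetric $H$ the function $V(\mu)=\E[H(\xi_1,\dots,\xi_n)]$ satisfies $\partial_\mu V(\mu)(x)=n\,\E[\partial_{x_1}H(x,\xi_2,\dots,\xi_n)]$ (a direct lifting computation). Writing $\Phi=u\star\gamma_k$, this yields the explicit representation
\[
\partial_\mu v_{k,n,m}(\mu)(x) \ = \ \E\Big[\partial_\mu(u\star\gamma_k)\Big(\tfrac1n\delta_{x-Z_1}+\tfrac1n\textstyle\sum_{j=2}^n\delta_{\xi_j-Z_j}\Big)(x-Z_1)\Big],
\]
and a lengthier analogue at second order. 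As $m\to\infty$ the Gaussian shifts $Z_j$ vanish and, as $n\to\infty$, the empirical measure converges to $\mu$ by Glivenko--Cantelli; using that $\partial_\mu(u\star\gamma_k)$ is bounded (because $\gamma_k$ has compact support, so $\gamma_k(x)$ and $\mu\circ\gamma_k^{-1}$ range in fixed compacts) one passes to the limit by dominated convergence, obtaining the iterated pointwise limits $\partial_\mu v_{k,n,m}\to\partial_\mu(u\star\gamma_k)\to\partial_\mu u$, and similarly at second order, paralleling \eqref{Conv_n_m}.

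The hard part will be establishing equicontinuity of the derivative families uniformly in $n$ and $m$; this is the analogue of Step III, now for the derivatives rather than for $u$ itself. Fixing $k$, I would take a joint modulus of continuity of the continuous function $\partial_\mu(u\star\gamma_k)$ on the relevant compacts and estimate, via \eqref{EstimateW2_bis} and Jensen's inequality exactly as in Step III, the increment of $\partial_\mu v_{k,n,m}$ in both arguments. The two genuinely new features are that the point $x$ enters both as the location of the extra atom $\tfrac1n\delta_{x-Z_1}$, whose displacement costs at most $O(n^{-1/2})$ in $\Wc_2$ by \eqref{EstimateW2_bis}, and as the evaluation point $x-Z_1$ of $\partial_\mu(u\star\gamma_k)$; both contributions are absorbed by the modulus, giving an $(n,m)$-uniform modulus of continuity. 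The same scheme, with a joint modulus of continuity of $\partial_x\partial_\mu(u\star\gamma_k)$ and $\partial_\mu^2(u\star\gamma_k)$, yields equicontinuity of the second-order families; the passage to equicontinuity in $k$ on a fixed compact $\mathscr M\times K$ is handled exactly as in Step IV.

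Finally I would carry out the diagonal extraction. Grouping $v_{k,n,m}$ together with its first- and second-order derivatives into a single vector-valued double sequence, the iterated pointwise limits from the second step let me apply \cite[Lemma D.1]{CossoRusso} to extract, for each fixed $k$, a common index $m_n$ along which $v_{k,n,m_n}$ and all of its derivatives converge pointwise to $u\star\gamma_k$ and its derivatives; equicontinuity together with the Ascoli--Arzel\`a theorem upgrades this to uniform convergence on $\mathscr M\times K$. Combining with the mollification-level convergences and applying \cite[Lemma D.1]{CossoRusso} once more over the index $k$ produces the diagonal sequence $u_k:=v_{k,n_k,m_{n_k}}$, which simultaneously satisfies \eqref{Conv_u_n_u}, then \eqref{Conv_der} when $u\in C^1(\mathscr P_2(\R^d))$, and \eqref{Conv_der_2nd} when $u\in C^2(\mathscr P_2(\R^d))$.
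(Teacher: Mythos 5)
Your proposal is correct and follows essentially the same route as the paper's own proof: the same family $v_{k,n,m}$ from Theorem \ref{T:Approx}, the same explicit representation of $\partial_\mu v_{k,n,m}$ (the paper obtains it by iterating formula (5.37) and Proposition 5.35 of \cite{CD18_I} and then symmetrizing, which is equivalent to your symmetric-function rule), the same iterated pointwise limits exploiting boundedness of $\partial_\mu(u\star\gamma_k)$ and the one-extra-atom Glivenko--Cantelli estimate, and the same equicontinuity-plus-diagonal extraction via \cite[Lemma D.1]{CossoRusso} and Ascoli--Arzel\`a. The only differences are presentational: you spell out the mollification-level chain rule and the vector-valued simultaneous extraction, which the paper compresses into citations and the phrase ``proceed as in Step IV.''
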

\begin{proof}
We proceed as in Step I of the proof of Theorem \ref{T:Approx} and consider, for every $k\in\N$, $u\star\gamma_k$, which is bounded (together with its derivatives), see \cite[Lemma 5.94]{CD18_I}. We split the rest of the proof into two steps.

\vspace{2mm}

\noindent\emph{Step I. Proof of item 1).} For every $k,n\in\N$, let $v_{k,n}$ be the function given by \eqref{v_n}. Notice that
\begin{equation*}
		\partial_\mu v_{k,n}(\mu)(x) = \E\bigg[\partial_\mu (u\star\gamma_k)\bigg(\frac{1}{n}\sum_{j=2}^n\delta_{\xi_i} + \frac{1}{n}\delta_x\bigg)(x)\bigg].
	\end{equation*}
Now, let us consider the sequence $\{v_{k,n,m}\}_{m\in\N}$ defined in the proof of Theorem \ref{T:Approx}, that is
	\begin{equation*}
		v_{k,n,m}(\mu) \ = \ \E\big[h_{k,n,m}(\xi_1,\ldots,\xi_n)\big],
	\end{equation*}
	where $h_{k,n,m}$ is of class $C^\infty(\R^{dn})$. For every $j=1,\ldots,n-1$, let $v_{k,n,m}^j\colon\R^{dj}\times\mathscr P_2(\R^d)\rightarrow\R$ be given by
\[
v_{k,n,m}^j(x_1,\ldots,x_j,\mu) \ = \ \E\big[h_{k,n,m}(x_1,\ldots,x_j,\xi_{j+1},\ldots,\xi_n)\big], \qquad \forall\,(x_1,\ldots,x_j,\mu)\in\R^{dj}\times\mathscr P_2(\R^d).
\]
Let also $v_{k,n,m}^n\colon\R^{dn}\rightarrow\R$ be given by $v_{k,n,m}^n\equiv h_{k,n,m}$. Notice that
\begin{align*}
v_{k,n,m}(\mu) \ &= \ \int_{\R^d} v_{k,n,m}^1(x,\mu)\,\mu(dx), \\
v_{k,n,m}^j(x_1,\ldots,x_j,\mu) \ &= \ \int_{\R^d} v_{k,n,m}^{j+1}(x_1,\ldots,x_j,x,\mu)\,\mu(dx), \qquad j=1,\ldots,n-1, \\
v_{k,n,m}^{n-1}(x_1,\ldots,x_{n-1},\mu) \ &= \ \int_{\R^d} v_{k,n,m}^n(x_1,\ldots,x_{n-1},x)\,\mu(dx).
\end{align*}
Hence, to obtain the $L$-derivative of $v_{k,n,m}$ we can apply iteratively \cite[formula (5.37)]{CD18_I} together with \cite[Proposition 5.35]{CD18_I}, from which we obtain
	\begin{align*}
		&\partial_\mu v_{k,n,m}(\mu)(x) \\
&= \ \frac{1}{n} \sum_{k=1}^n\E\left[\int_{\R^{dn}}\partial_{\mu}(u\star\gamma_k)\bigg(\frac{1}{n}\sum_{i\neq k}^n\delta_{\xi_i-y_i}+\frac{1}{n}\delta_{x-y_k}\bigg)(x-y_k)\,\rho_m(y_1)\dots\rho_m(y_n)dy_1\dots dy_n\right].
	\end{align*}
By symmetry,
	\begin{align*}
&\partial_\mu v_{k,n,m}(\mu)(x) \\
&= \ \E\left[\int_{\R^{dn}}\partial_{\mu}(u\star\gamma_k)\bigg(\frac{1}{n}\sum_{i= 2}^n\delta_{\xi_i-y_i}+\frac{1}{n}\delta_{x-y_1}\bigg)(x-y_1)\,\rho_m(y_1)\dots\rho_m(y_n)dy_1\dots dy_n\right].
	\end{align*}
Then, for every $\mu\in\mathscr P_2(\R^d)$ and $x\in\R^d$, it holds that
	\begin{equation}\label{Conv_n_m_der}
		\lim_{m\rightarrow\infty}\partial_\mu v_{k,n,m}(\mu)(x) \ = \ \partial_\mu v_{k,n}(\mu)(x), \qquad\qquad \lim_{n\rightarrow\infty}\partial_\mu v_{k,n}(\mu)(x) \ = \ \partial_\mu (u\star\gamma_k)(\mu)(x).
	\end{equation}
The above convergences can be proved proceeding as in Step II of the proof of Theorem \ref{T:Approx}, exploiting the fact that $\partial_\mu(u\star\gamma_k)$ is bounded, and also the following result:
	\begin{equation*}
\lim_{n\rightarrow\infty} \Wc_2\left(\frac{1}{n}\sum_{i=2}^n\delta_{\xi_i} + \frac{1}{n}\delta_x, \mu \right) = 0.
	\end{equation*}
	Indeed, it holds
	\begin{align}\label{EstimateW2_derivative}
		\Wc_2\left(\frac{1}{n}\sum_{i=2}^n\delta_{\xi_i} + \frac{1}{n}\delta_x, \mu \right) \ \leq \ \Wc_2\left(\frac{1}{n}\sum_{i=2}^n\delta_{\xi_i} + \frac{1}{n}\delta_x, \frac{1}{n}\sum_{i=1}^n\delta_{\xi_i} \right) + \Wc_2\left(\frac{1}{n}\sum_{i=1}^n\delta_{\xi_i}, \mu \right).
	\end{align}
	Looking at the right hand side of \eqref{EstimateW2_derivative}, we have that $\left\{\Wc_2\left(\frac{1}{n}\sum_{j=1}^n\delta_{\xi_i}, \mu \right)\right\}_n$ converges almost surely to zero as $n\to\infty$ (see \cite[Section 5.1.2]{CD18_I}). By \eqref{EstimateW2_bis}, it follows that
	\begin{align*}
		\Wc_2\left(\frac{1}{n}\sum_{j=2}^n\delta_{\xi_i} + \frac{1}{n}\delta_x, \frac{1}{n}\sum_{j=1}^n\delta_{\xi_i} \right)\leq \frac{1}{n}\|\xi_1 - x\|_{L^2},
	\end{align*}
	which also tends to zero as $n\to\infty$. Finally, by the pointwise convergences \eqref{Conv_n_m_der} we can proceed as in Step IV of the proof of Theorem \ref{T:Approx} to find a sequence $\{u_k\}_k$ such that \eqref{Conv_der} holds.

\vspace{2mm}

\noindent\emph{Step II. Proof of item 2).} Regarding $\partial^2_\mu v_{k,n,m}$, proceeding as in Step I for the function $\mu\mapsto\partial_\mu v_{k,n,m}(\mu)(x)$, with $x\in\R^d$ fixed, we obtain
	\begin{align*}
		&\partial^2_\mu v_{k,n,m}(\mu)(x,z) \\
&= \E\bigg[\int_{\R^{dn}}\!\!\!\!\!\partial_{\mu}(u\star\gamma_k)\bigg(\!\frac{1}{n}\sum_{i= 3}^n\!\delta_{\xi_i-y_i}\!\!+\!\frac{1}{n}\delta_{x-y_1}\!+\!\frac{1}{n}\delta_{z-y_2}\bigg)\!(x-y_1,z-y_2)\rho_m(y_1)\cdots\rho_m(y_n)dy_1\cdots dy_n\bigg].
	\end{align*}
On the other hand, concerning the derivative $\partial_x\partial_\mu v_{k,n,m}$, for every fixed $y_1,\ldots,y_n\in\R^d$, let us define the map $g_{\xi_1,\ldots,\xi_n,y_1,\ldots,y_n}\colon\R^d\times\R^d\rightarrow\R$ as follows: 
	\begin{equation*}
		g_{\xi_1,\ldots,\xi_n,y_1,\ldots,y_n}(x,z) \ = \ \partial_{\mu}(u\star\gamma_k)\bigg(\frac{1}{n}\sum_{i=2}^n\delta_{\xi_i-y_i}+\frac{1}{n}\delta_{x-y_1}\bigg)(z-y_1), \qquad \forall\,(x,z)\in\R^d\times\R^d.
	\end{equation*}
The partial derivative of $g_{\xi_1,\ldots,\xi_n,y_1,\ldots,y_n}$ with respect to $x$ can be computed thanks to \cite[Proposition 5.35]{CD18_I}:
	\begin{equation}\label{eq: der_g}
		\partial_x g_{\xi_1,\ldots,\xi_n,y_1,\ldots,y_n}(x,z) \ = \ \frac{1}{n}\partial^2_\mu (u\star\gamma_k) \bigg(\frac{1}{n}\sum_{i=2}^n\delta_{\xi_i-y_i}+\frac{1}{n}\delta_{x-y_1}\bigg)(z-y_1,x-y_1).
	\end{equation}
	Thus, noting that
\[
\partial_\mu v_{k,n,m}(\mu)(x) \ = \ \E\left[\int_{\R^{dn}}g_{\xi_1,\ldots,\xi_n,y_1,\ldots,y_n}(x,x)\,\rho_m(y_1)\dots\rho_m(y_n)dy_1\dots dy_n\right],
\]
by the usual chain rule combined with \eqref{eq: der_g}, we obtain
\begin{align*}
&\partial_x\partial_\mu v_{k,n,m}(\mu)(x) \\
&= \frac{1}{n}\E\bigg[\int_{\R^{dn}}\!\!\!\!\partial^2_\mu (u\star\gamma_k) \bigg(\frac{1}{n}\sum_{i=2}^n\delta_{\xi_i-y_i}+\frac{1}{n}\delta_{x-y_1}\bigg)(x-y_1,x-y_1)\,\rho_m(y_1)\cdots\rho_m(y_n)dy_1\cdots dy_n\bigg] \\
&+ \E\bigg[\int_{\R^{dn}}\!\!\!\!\partial_x\partial_\mu (u\star\gamma_k) \bigg(\frac{1}{n}\sum_{i= 2}^n\delta_{\xi_i-y_i}+\frac{1}{n}\delta_{x-y_1}\bigg)(x-y_1)\,\rho_m(y_1)\cdots\rho_m(y_n)dy_1\cdots dy_n\bigg].
\end{align*}
As in Step I, we can conclude that, for every $\mu\in\mathscr P_2(\R^d)$ and $x,z\in\R^d$, 
	\begin{align*}\label{Conv_n_m_der_2nd}
		\lim_{m\rightarrow\infty}\partial^2_\mu v_{k,n,m}(\mu)(x,z) \ &= \ \partial_\mu v_{k,n}(\mu)(x,z), &\qquad \lim_{n\rightarrow\infty}\partial^2_\mu v_{k,n}(\mu)(x,z) \ &= \ \partial^2_\mu (u\star\gamma_k)(\mu)(x,z),	\\
		\lim_{m\rightarrow\infty}\partial_x\partial_\mu v_{k,n,m}(\mu)(x) \ &= \ \partial_x\partial_\mu v_{k,n}(\mu)(x), &\qquad \lim_{n\rightarrow\infty}\partial_x\partial_\mu v_{k,n}(\mu)(x) \ &= \ \partial_x\partial_\mu (u\star\gamma_k)(\mu)(x).
	\end{align*}
Finally, using the pointwise convergences above and proceeding as in Step IV of the proof of Theorem \ref{T:Approx} we can find a sequence $\{u_k\}_k$ such that \eqref{Conv_der_2nd} holds.
\end{proof}
\small

\bibliographystyle{plain}
\bibliography{references}

\end{document}